\documentclass[12pt]{amsart}
\usepackage{amssymb}
\title{Almost commuting varieties for the symplectic Lie algebras}
\author{Ivan Losev}
\renewcommand{\sp}{\mathfrak{sp}}
\renewcommand{\sl}{\mathfrak{sl}}

\newcommand{\C}{\mathbb{C}}
\newcommand{\Sp}{\operatorname{Sp}}
\newcommand{\GL}{\operatorname{GL}}
\newcommand{\g}{\mathfrak{g}}
\newcommand{\quo}{/\!/}
\newcommand{\h}{\mathfrak{h}}
\newcommand{\rk}{\operatorname{rk}}
\newtheorem{Thm}{Theorem}[section]

\newtheorem{Cor}[Thm]{Corollary}
\newtheorem{Lem}[Thm]{Lemma}
\theoremstyle{definition}

\newtheorem{Rem}[Thm]{Remark}

\numberwithin{equation}{section}
\textheight=232mm
\evensidemargin=0cm\oddsidemargin=0cm\textwidth=16.5cm \topmargin=-1.2cm
\begin{document}
\begin{abstract}
In this note we introduce and study the almost commuting varieties for the symplectic Lie algebras.
\end{abstract}
\maketitle

\section{Introduction}\label{S_intro}
The commuting schemes of  semisimple Lie  algebras is a classical object of study in
Lie theory and Invariant theory. Let $\g$ be a semisimple Lie algebra.
Consider the subscheme $C_2(\g)\subset \g^{\oplus 2}$ defined by the equation $[x,y]=0$.
This is the commuting scheme for $\g$. Note that we can define $C_k(\g)\subset \g^{\oplus k}$
analogously but in this note we restrict ourselves to the case of $k=2$.

The algebro-geometric properties of $C_2(\g)$ are largely unknown. It is known that this
scheme is irreducible, \cite{Richardson}. But it is not known whether $C_2(\g)$
is reduced (or whether $C_2(\g)$ with reduced scheme structure is normal) even in
the case of $\g=\mathfrak{sl}_n$. However, for $\g=\sl_n$ there is a related
variety, the {\it almost commuting} variety. The version we need was introduced
and studied in detail in \cite{GG}.

We now recall the definition. Consider the vector space $R:=\sl_n^{\oplus 2}\oplus \C^n\oplus
(\C^{n})^*$. We write $(x,y,i,j)$ for a typical element of $R$. We have the subscheme $M_n\subset R$ defined by $[x,y]+ij=0$. The following is the main result of \cite{GG}.

\begin{Thm}[Theorem 1.1.2 in \cite{GG}]
The scheme $M_n$ is a reduced complete intersection.
It has $n+1$ irreducible components, all of which have dimension $n^2+2n-2$.
\end{Thm}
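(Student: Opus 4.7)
The plan is to establish the four assertions --- complete intersection, reduced, $n+1$ irreducible components, all of dimension $n^2+2n-2$ --- in the following order: first obtain a dimension bound that simultaneously yields the complete intersection property and the correct dimension of every component, then identify the components explicitly via a natural stratification, and finally upgrade set-theoretic statements to the scheme-theoretic ones by invoking Cohen--Macaulayness.

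The ambient space $R$ has dimension $2(n^2-1)+2n = 2n^2+2n-2$. Viewed in $\gl_n$, the condition $[x,y]+ij=0$ imposes $n^2-1$ equations in $\sl_n$ together with the single trace equation $\operatorname{tr}(ij)=0$, so $M_n$ is cut out by $n^2$ scalar equations. Thus an upper bound $\dim M_n \leq n^2+2n-2$ immediately yields both that $M_n$ is a complete intersection and that every irreducible component has dimension exactly $n^2+2n-2$.

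To identify the components, I would exploit the fact that $[x,y]=-ij$ has rank at most $1$, with image contained in $\C\cdot i$. Stratify $M_n$ by the invariant $k := \dim \C\langle x,y\rangle\cdot i \in\{0,1,\ldots,n\}$. The subspace $V_k := \C\langle x,y\rangle\cdot i$ is automatically $(x,y)$-stable, and any complement $W$ of dimension $n-k$ carries a genuinely commuting pair $(x|_W,y|_W)$ because $ij$ is supported on $V_k$. This presents the stratum $M_n^{(k)}$ as a bundle over $\operatorname{Gr}(k,n)$ whose fibres combine a cyclic almost-commuting configuration on $V_k$ with a commuting pair on $W$. Using Richardson's formula $\dim C_2(\gl_m)=m^2+m$ on the $W$-part and an explicit parametrization on the $V_k$-part, a direct count should yield dimension $n^2+2n-2$ for every $k$. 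Taking closures then gives $n+1$ candidate components $M_{n,k}:=\overline{M_n^{(k)}}$, each irreducible since the stratifying data is.

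Since $M_n$ is a complete intersection, it is Cohen--Macaulay, so by unmixedness reducedness will follow from generic reducedness along each $M_{n,k}$. I would verify this by exhibiting a smooth point of $M_{n,k}$: choose $(x,y,i,j)$ with $x|_W$ regular semisimple (so that $y|_W$ is forced to be diagonal in the same basis) and with the cyclic data on $V_k$ in a suitably generic position, then check that the $n^2\times(2n^2+2n-2)$ Jacobian of the defining equations has full rank. This reduces to the surjectivity of $\operatorname{ad} x$ on appropriate off-diagonal blocks, which is a standard computation in the regular-semisimple case. The main obstacle I anticipate is the dimension upper bound together with the identification of each stratum closure as a full component of the expected dimension: the cyclic almost-commuting configurations on $V_k$ are not themselves transparent, and care is required to avoid overcounting as the splitting $V_k\oplus W$ varies over $\operatorname{Gr}(k,n)$.
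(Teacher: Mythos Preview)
The paper does not prove this statement: it is quoted as Theorem~1.1.2 of \cite{GG} in the introduction and serves purely as motivation for the symplectic analogue (Theorem~\ref{Thm:X_n_properties}). There is therefore no proof in the paper to compare your proposal against.

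On the proposal itself: the overall architecture (dimension bound $\Rightarrow$ complete intersection $\Rightarrow$ Cohen--Macaulay $\Rightarrow$ reduced from generically reduced) and the stratification by $k=\dim\C\langle x,y\rangle\cdot i$ are indeed the ones used in \cite{GG}. However, one step is wrong as stated: you assert that ``any complement $W$ of dimension $n-k$ carries a genuinely commuting pair $(x|_W,y|_W)$'', but an arbitrary complement to $V_k$ is not $(x,y)$-stable, so $x|_W,y|_W$ are not even endomorphisms of $W$. What is true is that the induced operators on the \emph{quotient} $\C^n/V_k$ commute, since $\operatorname{im}[x,y]\subset\C i\subset V_k$. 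This forces you to replace the ``bundle over $\operatorname{Gr}(k,n)$'' picture by a parabolic-induction picture based on filtrations rather than splittings, and once you do that the dimension count is no longer the simple product count you suggest. You also never actually prove the key dimension upper bound; you only say the stratum dimensions ``should'' come out to $n^2+2n-2$. In \cite{GG} this bound is obtained by passing to a resolution by the incidence variety of $(x,y,i,j)$ together with a compatible complete flag, and the $n+1$ components are then read off from that resolution --- your anticipated obstacle is exactly where the real work lies.
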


The goal of this note is to define the analog of $M_n$ for the symplectic Lie algebra
$\g=\sp_{2n}$, study its properties and apply to the study of the categorical quotient
$C_2(\g)\quo G$ (where $G=\operatorname{Sp}_{2n}$).

We will need a notation. Let $\C^{2n}$ be the tautological representation of $\sp_{2n}$.
We can identify $S^2(\C^{2n})$ with  $\sp_{2n}$ in the standard way (the Lie bracket on
$S^2(\C^{2n})$ is the restriction of the Poisson bracket). So for $i\in \C^{2n}$ we can view $i^2\in S^2(\C^{2n})$
as an element of $\sp_{2n}$.
The almost commuting scheme for $\sp_{2n}$ is defined by
\begin{equation}\label{eq:almost_commut_C}
X_n=\{(x,y,i)\in \sp_{2n}^{\oplus 2}\oplus \C^{2n}| [x,y]+i^2=0\}.
\end{equation}

Here is our main result concerning $X_n$. Somewhat surprisingly, its algebro-geometric properties
are even better than those of $M_n$:

\begin{Thm}\label{Thm:X_n_properties}
The scheme $X_n$ is a reduced complete intersection of dimension $2n^2+3n$.
It is irreducible.
\end{Thm}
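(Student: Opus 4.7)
The ambient space $V := \sp_{2n}^{\oplus 2} \oplus \C^{2n}$ has dimension $4n^2 + 4n$, and $X_n$ is cut out by the $\sp_{2n}$-valued equation $[x,y] + i^2 = 0$, i.e.\ by $2n^2 + n$ scalar equations. By Krull's Hauptidealsatz every irreducible component of $X_n$ has dimension at least $2n^2 + 3n$, so to prove the complete intersection and dimension claims it suffices to establish the matching upper bound $\dim X_n \leq 2n^2 + 3n$.

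To obtain this bound, the plan is to project via $p : X_n \to \sp_{2n}$, $(x,y,i) \mapsto x$, and stratify the target by Jordan type, writing a stratum $S$ in terms of a Levi $L := \Sp_{2n}^{x_s}$ and a nilpotent orbit in the Lie algebra of $L$. A standard calculation then gives $\dim S = \dim \sp_{2n} + \dim Z(L) - \dim \sp_{2n}^x$ for $x \in S$. Since the sub-projection $p^{-1}(x) \to \C^{2n}$, $(y,i) \mapsto i$, has affine fibers modelled on $\sp_{2n}^x$ over the image $Z_x := \{i \in \C^{2n} : i^2 \in [x, \sp_{2n}]\}$, we obtain
\[
\dim p^{-1}(S) \;=\; \dim \sp_{2n} + \dim Z(L) + \dim Z_x,
\]
so the desired bound reduces to the pointwise inequality $\dim Z(L) + \dim Z_x \leq 2n$, which will be attained on the regular semisimple stratum.

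I expect this inequality to be the main technical obstacle. The key observation is that $i \mapsto i^2$ is (up to scalar) the moment map for $\Sp_{2n}$ acting on its defining representation, with image the closure $\overline{\mathcal{O}}_{\min}$ of the minimal nilpotent orbit, of dimension $2n$. Since this quadratic map is generically two-to-one, $\dim Z_x = \dim([x, \sp_{2n}] \cap \overline{\mathcal{O}}_{\min})$, and the bound amounts to showing that $Z(L) \subseteq \sp_{2n}^x$, viewed via Killing duality as linear forms on $\sp_{2n}$, imposes $\dim Z(L)$ independent constraints on $\overline{\mathcal{O}}_{\min}$. This rests on the fact that $\overline{\mathcal{O}}_{\min}$ is not contained in any proper $\Sp_{2n}$-invariant linear subspace of $\sp_{2n}$, together with a case-by-case verification controlled by the Jordan structure of $x$.

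The remaining assertions then follow. For irreducibility, the analysis above identifies $p^{-1}(\sp_{2n}^{\mathrm{rs}})$ as the unique open dense top-dimensional part of $X_n$, so it suffices to show it is irreducible. Over $x \in \h^{\mathrm{rs}}$ the fiber breaks into $2^n$ pieces indexed by subsets $T \subseteq \{1,\ldots,n\}$ (encoding, for each index, which of the $e$- or $f$-components of $i$ vanishes), and the Weyl monodromy $\pi_1(\sp_{2n}^{\mathrm{rs}}) \twoheadrightarrow W = (\mathbb{Z}/2)^n \rtimes S_n$ acts transitively on this set. Reducedness is then automatic: $X_n$ is a complete intersection hence Cohen--Macaulay, and generic reducedness is witnessed at any $(x,y,i)$ with $x$ regular semisimple and $i \neq 0$, where a direct computation shows the differential of $(x,y,i) \mapsto [x,y] + i^2$ to be surjective.
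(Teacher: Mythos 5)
Your overall architecture --- lower bound from Krull, upper bound by fibering over the first matrix $x$, irreducibility via the $W$-action on the $2^n$ components over $\h^{reg}$, then reducedness from ``Cohen--Macaulay $+$ generically reduced'' witnessed at a point where $\mu$ is submersive --- matches the paper, and your irreducibility and reducedness steps are essentially identical to its Lemma \ref{Lem:irreducibility} and the final paragraph of its proof. Where you genuinely diverge is in how you get the dimension bound: you stratify all of $\sp_{2n}$ by Jordan type and reduce to the pointwise inequality $\dim Z(L)+\dim Z_x\leqslant 2n$ (with strict inequality required off the regular semisimple stratum, or else your claim that $\pi^{-1}(\g^{reg})$ is dense collapses). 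The paper instead invokes the symplectic slice theorem (Lemma \ref{Lem:local_structure}) to reduce to a product of smaller almost commuting schemes over a \emph{nilpotent} point, where only the crude bound $\mathbb{O}_{min}\not\subset[\g,x]$ (Lemma \ref{Lem:dim_preimage}) is needed; the $\C^{2k}$ factor coming from $\mathfrak{z}(\mathfrak{l})$ is handled by the slice structure rather than by an inequality.

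The problem is that your central inequality is asserted, not proved. The one fact you cite --- that $\overline{\mathbb{O}}_{min}$ lies in no proper $G$-stable linear subspace --- only gives $\dim Z_x\leqslant 2n-1$ for $x\neq 0$, which suffices when $\dim Z(L)\leqslant 1$ but says nothing when the center of the Levi is larger. What you actually need is that the $\dim\mathfrak{z}(\mathfrak{l})$ quadratic forms $i\mapsto\langle z,i^2\rangle$, $z\in\mathfrak{z}(\mathfrak{l})$, cut $\C^{2n}$ down by exactly $\dim\mathfrak{z}(\mathfrak{l})$, and then that the extra forms coming from $\g^x\setminus\mathfrak{z}(\mathfrak{l})$ (present whenever $x$ is not regular semisimple) drop the dimension by at least one more on \emph{every} top-dimensional component of that locus. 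This is true, and provable by decomposing $\C^{2n}$ according to the block structure of $L=\prod\GL_{n_i}\times\Sp_{2n_0}$, but it is exactly the technical heart of the theorem, and ``a case-by-case verification controlled by the Jordan structure of $x$'' is a placeholder for it rather than an argument. Until that verification is written out --- or replaced by a reduction like the paper's Lemma \ref{Lem:local_structure}, which packages the same bookkeeping into the slice theorem --- the proof of the dimension bound, and hence of density of $X_n^{reg}$ and of irreducibility, is incomplete. One smaller point: in your reducedness step, surjectivity of $d\mu$ at $(x,y,i)$ with $x$ regular semisimple requires $i\cdot\C^{2n}$ to surject onto $\g/[x,\g]\cong\h$, which fails for some nonzero $i$ (e.g.\ $i$ with a single nonzero coordinate), so you should exhibit a specific $i$ such as $p_1=\dots=p_n=1$, $q_1=\dots=q_n=0$ as the paper does.
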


Here is an application of $X_n$ to the study of the commuting scheme $C_2(\g)$ (with $\g=\mathfrak{sp}_{2n}$). Note that $G$ naturally acts on $X_n$. Let $\h,W$
be the Cartan subalgebra and the Weyl group of $\g$.

\begin{Thm}\label{Thm:X_n_quotient}
We have scheme isomorphisms $X_n\quo G\xrightarrow{\sim} C_2(\g)\quo G\xrightarrow{\sim}
\h^{\oplus 2}/W$, where $W$ acts on $\h^{\oplus 2}$ diagonally.
\end{Thm}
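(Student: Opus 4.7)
The plan is to factor both claimed isomorphisms through the closed $G$-equivariant inclusions $\h^{\oplus 2}\hookrightarrow C_2(\g)\hookrightarrow X_n$, where the second sends $(x,y)$ to $(x,y,0)$. They induce ring maps
\[
\C[X_n]^G \xrightarrow{\alpha} \C[C_2(\g)]^G \xrightarrow{\beta} \C[\h^{\oplus 2}]^W,
\]
with $\alpha$ surjective because $C_2(\g)\subset X_n$ is a closed subscheme of an affine $G$-scheme and $G$ is reductive. It therefore suffices to prove the composite $\phi=\beta\circ\alpha$ is an isomorphism: this forces $\ker\alpha\subset\ker\phi=0$, so $\alpha$ becomes an isomorphism, and then so is $\beta$.

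For surjectivity of $\phi$, I would use a Chevalley-type restriction for pairs. In a symplectic basis $\{e_k,f_k\}$ diagonalising $\h$, one computes for $x,y\in\h$ that $\operatorname{tr}(x^a y^b) = 2\sum_{i=1}^n h_{1,i}^a h_{2,i}^b$ when $a+b$ is even and $0$ otherwise. Each such trace lies in $\C[\g^{\oplus 2}]^G$ and pulls back to $\C[X_n]^G$, so it remains to show that these power sums generate $\C[\h^{\oplus 2}]^W$. This follows by a two-step computation: first, $(\mathbb{Z}/2)^n$-invariance within $W$ reduces the ring to polynomials in $h_{1,i}^2$, $h_{2,i}^2$, $h_{1,i}h_{2,i}$ for each $i$; then Weyl's polarisation theorem for $S_n$ acting on $n$ copies of $\C^3$ produces generators of the form $\sum_i h_{1,i}^p h_{2,i}^q$ with $p+q$ even.

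For injectivity of $\phi$, I would use a slice argument on the open subset $X_n^{\mathrm{rss}}:=\{(x,y,i)\in X_n:x\text{ is regular semisimple}\}$. For $x\in\h_{\mathrm{rss}}$, $[x,\cdot]$ vanishes on $\h$ and is invertible on its root-space complement, so the relation $[x,y]+i^2=0$ determines the non-Cartan part of $y$ from $i$ and imposes the vanishing of the $\h$-component of $i^2$. A direct weight computation with $i=\sum_k(a_k e_k+b_k f_k)$ gives $(i^2)_\h \propto \sum_k a_k b_k\, h_k$, so the constraint is $a_k b_k=0$ for every $k$. The fibre over $x$ is thus $\h\times V$ with $V=\{i\in\C^{2n}:a_k b_k=0\ \forall k\}$, and since $H$ acts on $V$ with the nontrivial weights $\pm\epsilon_k$ one checks $\C[V]^H=\C$. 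A standard slice identification, with residual group $N_G(H)/H=W$ acting on $\h_{\mathrm{rss}}\oplus\h$, then yields $\C[X_n^{\mathrm{rss}}]^G \cong \C[\h_{\mathrm{rss}}\oplus\h]^W$. By Theorem~\ref{Thm:X_n_properties}, $X_n$ is irreducible, so $X_n^{\mathrm{rss}}$ is open dense and $\C[X_n]^G$ embeds in $\C[\h_{\mathrm{rss}}\oplus\h]^W$. Since any such invariant restricts to a genuine polynomial on $\h^{\oplus 2}\subset X_n$, the image lies in $\C[\h^{\oplus 2}]^W$, giving injectivity.

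The main obstacle is the slice calculation: identifying the fibre $V$ precisely from the defining equation and verifying $\C[V]^H=\C$, so that the vector datum $i$ does not contribute any new invariants beyond those coming from $\h^{\oplus 2}$. Once this is in place, the polarisation argument handles surjectivity, and the formal reduction through $\alpha$ upgrades the ring isomorphism $\phi$ to the two scheme isomorphisms in the theorem.
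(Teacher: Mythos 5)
Your proof is correct, but it takes a genuinely different route from the paper's at both key steps, so it is worth comparing. For the isomorphism $X_n\quo G\xrightarrow{\sim} C_2(\g)\quo G$, the paper uses Corollary \ref{Cor:closed_orbits} to see that the closed immersion of quotients is bijective and then invokes reducedness of $X_n\quo G$; you instead bypass the closed-orbit analysis entirely by observing that $\alpha$ is automatically surjective (reductivity) and squeezing its injectivity out of the injectivity of the composite $\phi$. For the isomorphism $C_2(\g)\quo G\xrightarrow{\sim}\h^{\oplus 2}/W$, the paper proves surjectivity of the restriction map by a Poisson-theoretic argument resting on Wallach's theorem that $\C[\h^{\oplus 2}]^W$ is Poisson-generated by the two copies of $\C[\h]^W$; you replace this by an explicit Chevalley-restriction-for-pairs computation, showing that $\C[\h^{\oplus 2}]^W$ is generated by the even power sums $\sum_i h_{1,i}^a h_{2,i}^b$ ($a+b$ even) via the $(\mathbb{Z}/2)^n$-reduction to the quadric cones and Weyl's polarization theorem for $S_n$ on $(\C^3)^{\oplus n}$, and that these are hit by $\operatorname{tr}(x^ay^b)$. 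Your injectivity argument via the slice over the regular semisimple locus is essentially a sharpening of the paper's Lemma \ref{Lem:irreducibility}: the fibre identification $\h\times V$ with $V=Y_n$ and the verification $\C[V]^H=\C$ are exactly the right computations, and they correctly show that the vector $i$ contributes no new invariants. What your route buys is self-containedness (no appeal to \cite{Wallach} or to Poisson brackets on the reduction); what the paper's route buys is brevity given the cited results and a cleaner conceptual picture of the closed orbits. Two small points to tighten: the injectivity of $\C[X_n]^G\hookrightarrow\C[X_n^{\mathrm{rss}}]^G$ needs reducedness of $X_n$ in addition to irreducibility (a dense open in an irreducible but nonreduced scheme need not be schematically dense) --- both are supplied by Theorem \ref{Thm:X_n_properties}; and you should say explicitly that the slice isomorphism $\C[X_n^{\mathrm{rss}}]^G\cong\C[\h^{\mathrm{rss}}\oplus\h]^W$ is realized by restriction of functions to $\h^{\mathrm{rss}}\times\h\times\{0\}$, which factors through restriction to $\h^{\oplus 2}\times\{0\}$, so that the injective map you produce really is $\phi$.
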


Note that the isomorphism $C_2(\g)\quo G\xrightarrow{\sim}\h^{\oplus 2}/W$ is a special
case (for $k=2$) of the main result of \cite{CN}.

{\bf Acknowledgements}. I would like to thank Tsao-Hsien Chen for his talk on \cite{CN} at Yale
which motivated this work. My work was partially supported by the NSF
under grant DMS-2001139.

\section{Properties of almost commuting variety}
\subsection{Upper triangularity property}
Here we prove the following easy result.

\begin{Lem}\label{Lem:upper_triangularity}
Let $(x,y,i)\in X_n$. Then there is a Borel subalgebra of $\mathfrak{sp}_{2n}$
containing both $x$ and $y$.
\end{Lem}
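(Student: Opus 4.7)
I would argue by induction on $n$, with the case $n=0$ vacuous. The heart of the inductive step is to produce a common $x,y$-invariant line $\ell \subset \C^{2n}$ with $i \in \ell^\perp$. Granted this, $\ell$ is automatically isotropic, and $x,y,i^2$ descend to the symplectic reduction $\ell^\perp/\ell \cong \C^{2(n-1)}$, yielding a triple $(\bar x, \bar y, \bar i) \in X_{n-1}$. The induction hypothesis supplies a complete $\bar x, \bar y$-invariant isotropic flag in $\ell^\perp/\ell$, whose preimage in $\ell^\perp$, prepended with $\ell$ and completed at the top by duals, is a complete isotropic flag of $\C^{2n}$ preserved by $x$ and $y$; its stabilizer is the desired Borel of $\sp_{2n}$.

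To produce $\ell$, observe that under the identification $\sp_{2n}=S^2(\C^{2n})$ the element $i^2$ acts on $\C^{2n}$ by the rank-at-most-one endomorphism $w \mapsto 2\omega(i,w)i$. Hence the defining equation $[x,y]=-i^2$ forces $[x,y]$, viewed as an element of $\gl_{2n}$, to have rank $\le 1$. Laffey's classical theorem on pairs of matrices whose commutator has rank at most one then produces a common eigenvector $v \in \C^{2n}$ for $x$ and $y$. Crucially, $[x,y]v=0$, and (if $i \ne 0$) $\ker(i^2)=i^\perp$, so $\omega(i,v)=0$ automatically. Set $\ell := \C v$.

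The descent step is then routine. Since $x,y \in \sp_{2n}$ preserve $\ell$, they also preserve $\ell^\perp$ and induce $\bar x, \bar y \in \sp(\ell^\perp/\ell)$. The endomorphism $i^2$ kills $\ell$ (because $\omega(i,v)=0$) and maps $\ell^\perp$ into $\C i \subset \ell^\perp$ (because $i \in \ell^\perp$); a direct calculation using $i^2(w)=2\omega(i,w)i$ shows that the induced operator on $\ell^\perp/\ell$ equals $\bar i^2$, where $\bar i := [i]$, under the identification $\sp(\ell^\perp/\ell)=S^2(\ell^\perp/\ell)$. Consequently $[\bar x, \bar y]+\bar i^2=0$, i.e.\ $(\bar x,\bar y,\bar i) \in X_{n-1}$, and the induction applies.

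The only substantive external input is Laffey's theorem, which is also what drives the analogous upper-triangularity lemma in \cite{GG}. The new ingredient in the symplectic case is the \emph{automatic} inclusion $v \in i^\perp$ for any common eigenvector $v$ of $x,y$; this is exactly what is needed to make the symplectic quotient behave well with respect to $i$, and it is the only place where the specific shape of the almost-commuting equation for $\sp_{2n}$ really enters.
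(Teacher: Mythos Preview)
Your argument is correct and follows essentially the same inductive scheme as the paper: use $\rk[x,y]\le 1$ to get a common eigenvector $v$, pass to $v^\perp/\C v$, and repeat. The paper's proof is a touch more economical in that it only tracks the condition $\rk[x_1,y_1]\le \rk[x,y]\le 1$ rather than descending the full triple to $X_{n-1}$; in particular, your observation $v\in i^\perp$, while correct, is not actually needed for the induction.
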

\begin{proof}
We note that $\rk[x,y]=\rk (i^2)\leqslant 1$. So $x,y$ have a common eigenvector, see, e.g.,
\cite[Lemma 12.7]{EG}. Denote this vector by $v$ and let $v^\perp$ denote its skew-orthogonal
complement. The subspace $v^\perp$ is $x$- and $y$-stable. Let $x_1,y_1$ be the operators
on $v^\perp/\C v$ induced by $x,y$. Then we have $\rk [x_1,y_1]\leqslant \rk[x,y]\leqslant 1$.
So we can argue by induction to show that $x,y$ preserve a lagrangian flag. Equivalently,
they are contained in some Borel subalgebra of $\g$.
\end{proof}

Here is a standard corollary.

\begin{Cor}\label{Cor:closed_orbits}
Let $(x,y,i)\in X_n$ be such that $G(x,y,i)$ is closed. Then $[x,y]=0, i=0$ and $x,y$
are semisimple. Conversely, the orbit of such a triple is closed.
\end{Cor}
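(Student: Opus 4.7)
My plan is to apply the Hilbert--Mumford criterion using a cocharacter tailored to the Borel supplied by Lemma \ref{Lem:upper_triangularity}.

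By that Lemma, after replacing $(x,y,i)$ with a $G$-translate we may assume that $x,y$ lie in a common Borel $\mathfrak{b}\subset\sp_{2n}$, the stabilizer of a Lagrangian flag $0=V_0\subset V_1\subset\cdots\subset V_{2n}=\C^{2n}$ with $V_k^\perp=V_{2n-k}$. Let $T\subset B$ be a maximal torus, write $\mathfrak{b}=\h\oplus\mathfrak{n}$ with $\mathfrak{n}$ the nilradical, and decompose $x=h_x+n_x$, $y=h_y+n_y$ with $h_\bullet\in\h$ and $n_\bullet\in\mathfrak{n}$. The first step is to verify that $i\in V_n$. Since $[x,y]\in[\mathfrak{b},\mathfrak{b}]=\mathfrak{n}$, we have $i^2=-[x,y]\in\mathfrak{n}$. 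Under the identification $\sp_{2n}=S^2\C^{2n}$, the element $i^2$ acts on $\C^{2n}$ by $w\mapsto\omega(i,w)i$, with image $\C i$ and kernel $i^\perp$. Letting $j$ be minimal with $i\in V_j$, the inclusion $i^2(V_j)\subset V_{j-1}$ together with $i\notin V_{j-1}$ forces $\omega(i,V_j)=0$, equivalently $i\in V_j^\perp=V_{2n-j}$; combined with the minimality of $j$ this forces $j\le n$.

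Next I choose a cocharacter $\lambda:\C^*\to T$ whose weights on a flag-adapted basis of $\C^{2n}$ are $c_1>c_2>\cdots>c_n>0$ on $V_n$ and $-c_n,\ldots,-c_1$ on the complementary vectors. Then $\operatorname{Ad}(\lambda(t))$ strictly contracts every positive root space of $\mathfrak{n}$ and fixes $\h$, while $\lambda(t)$ strictly contracts $V_n\ni i$. Consequently
\[
\lim_{t\to 0}\lambda(t)\cdot(x,y,i)=(h_x,h_y,0).
\]
Because $G(x,y,i)$ is closed, the limit lies in the orbit, so there exists $g\in G$ with $g\cdot(x,y,i)=(h_x,h_y,0)$. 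The last coordinate gives $i=0$; the first two show that $(x,y)$ is $G$-conjugate to $(h_x,h_y)\in\h^{\oplus 2}$, so $[x,y]=0$ and both $x,y$ are semisimple.

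For the converse, suppose $[x,y]=0$, $i=0$, and $x,y$ are semisimple; after conjugation we may take $x,y\in\h$. The stabilizer of $(x,y,0)$ is $Z_G(x)\cap Z_G(y)$, an intersection of two Levi subgroups of $G$, hence reductive; Matsushima's criterion then yields that $G(x,y,0)$ is closed. The main obstacle in this plan is the containment $i\in V_n$, which is where the symplectic nature of the problem enters; once it is established, the Hilbert--Mumford computation and the converse are routine.
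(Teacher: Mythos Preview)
The paper records this as a ``standard corollary'' and supplies no proof, so there is no specific approach to compare against. Your forward direction is correct and well executed: the verification that $i\in V_n$ is clean, and the Hilbert--Mumford contraction along a regular dominant cocharacter of $T$ is exactly the right move.

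The converse, however, has a genuine gap. Matsushima's criterion asserts only that $G/H$ is affine if and only if $H$ is reductive; it does \emph{not} say that an orbit with reductive stabilizer is closed in an ambient affine $G$-variety. For instance, let $\C^\times$ act on $\C^2$ by $t\cdot(a,b)=(ta,t^{-1}b)$: the orbit of $(1,0)$ has trivial (hence reductive) stabilizer, yet it is not closed. So reductivity of $Z_G(x)\cap Z_G(y)$ is not enough. One correct route is to run Hilbert--Mumford in this direction as well: if $\lambda$ is a one-parameter subgroup for which $\lim_{t\to 0}\lambda(t)\cdot(x,y,0)$ exists, let $P=P(\lambda)$ with Levi $L$ and unipotent radical $U$; existence of the limit forces $x,y\in\mathfrak p$, and the limit is their image $(\bar x,\bar y,0)$ under the projection $\mathfrak p\to\mathfrak l$. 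Since $x,y$ are commuting semisimple elements of $\mathfrak p$, they lie in a common Cartan subalgebra of $\mathfrak p$, hence some $p\in P$ conjugates both into $\mathfrak l$; because the projection $\mathfrak p\to\mathfrak l$ is $U$-invariant, this yields $(\bar x,\bar y)\in G\cdot(x,y)$. Thus every one-parameter limit lies in the orbit, so the orbit is closed. Alternatively you may simply cite the standard fact (essentially in Richardson's paper already referenced) that the diagonal $G$-orbit of a tuple of commuting semisimple elements in $\g^{\oplus k}$ is closed.
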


\begin{Rem}\label{Rem:upper_triangularity}
One can ask for which simple Lie algebras $\g$ the claim Lemma \ref{Lem:upper_triangularity} holds,
where we consider the condition $[x,y]\in \mathbb{O}_{min}$ for the minimal
nilpotent orbit $\mathbb{O}_{min}$. By \cite[Lemma 12.7]{EG} it holds for $\g=\mathfrak{sl}_n$. And it also holds for $\g=\mathfrak{sp}_{2n}$. In fact, it does not hold for any other simple
Lie algebra. Indeed, $\dim \mathbb{O}_{min}>2\dim \h$ for all $\g$ different from
$\mathfrak{sl}_n,\mathfrak{sp}_{2n}$. Assuming the inequality holds, consider a regular element
$x\in \mathfrak{h}$. For any element $z\in \mathbb{O}_{min}\cap \h^{\perp}$ we can find
$y\in \g$ with $[x,y]=z$. On the other hand, if $x,y$ lie in a Borel subalgebra $\mathfrak{b}$,
then $\mathfrak{b}$ is one of $|W|$ Borel subalgebras of $\g$ containing $x$. For any such
$\mathfrak{b}$, we have $\dim (\mathbb{O}_{min}\cap \mathfrak{b})=\frac{1}{2}\dim \mathbb{O}_{min}$,
see, e.g., \cite[Theorem 3.3.7]{CG}. If $\dim(\mathbb{O}_{min}\cap \h^\perp)> \dim (\mathbb{O}_{min}\cap \mathfrak{b})$, then we can find $y$ such that $x$ and $y$ do not lie in the same
Borel subalgebra but $[x,y]\in \mathbb{O}_{min}$.
 \end{Rem}

\subsection{Local structure}
The goal of this section is to describe the structure of $X_n$ near a closed $G$-orbit.
Recall that the closed orbits are described by Corollary \ref{Cor:closed_orbits}.

Let $p:=(x,y,0)$ be a point with closed $G$-orbit. Then $x,y$ are commuting semisimple elements.
The common centralizer, $L$, of $x$ and $y$ is a Levi subgroup in $G$, hence has the form
$\prod_{i=1}^k \GL_{n_i}\times \Sp_{2n_0}$ for a partition $n=n_0+n_1+\ldots+n_k$ into
the sum of positive integers. Consider the subscheme $\C^{2k}\times \prod_{i=1}^k M_{n_i}\times X_{n_0}
\subset \mathfrak{l}^{\oplus 2}\oplus \C^{2n}$,
where $M_{n_i}$ was defined in Section \ref{S_intro} and $\C^{2k}$ is identified
with $\mathfrak{z}(\mathfrak{l})^{\oplus 2}$. It comes with an action of $L$.
We can form the homogeneous bundle
$G\times^L (\C^{2k}\oplus \prod_{i=1}^k M_{n_i}\times X_{n_0})$. Let $X_n^{\wedge_{Gp}}$ denote the spectrum of the completion of $\C[X_n]$ at the ideal of all functions vanishing at $Gp$. Similarly,
we can consider the scheme $\left(G\times^L (\C^{2k}\times\prod_{i=1}^k M_{n_i}\times X_{n_0})\right)^{\wedge_{G/L}}$, here we complete with respect to the ideal of all functions on $G\times^L (\C^{2k}\times\prod_{i=1}^k M_{n_i}\times X_{n_0})$ vanishing at the orbit $G\times^L\{0\}$.

\begin{Lem}\label{Lem:local_structure}
We have a $G$-equivariant scheme isomorphism
$$X_n^{\wedge_{Gp}}\xrightarrow{\sim} \left(G\times^L (\C^{2k}\times\prod_{i=1}^k M_{n_i}\times X_{n_0})\right)^{\wedge_{G/L}}.$$
\end{Lem}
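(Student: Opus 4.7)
The plan is to apply the formal algebraic Luna slice theorem to $X_n$ at $p$: construct an $L$-invariant affine slice $S$ through $p$ transverse to $Gp$ so that $X_n^{\wedge_{Gp}}\cong(G\times^L S)^{\wedge_{G/L}}$, and then identify $S$ formally with $\C^{2k}\times\prod_i M_{n_i}\times X_{n_0}$. Fix an $L$-invariant decomposition $\g=\mathfrak{l}\oplus\mathfrak{m}$. A short Jacobi-identity computation, together with the hypothesis that $L$ is the exact common centralizer of $x$ and $y$, shows that $T_p(Gp)\subset\mathfrak{m}^{\oplus 2}$ equals the kernel of the surjective $L$-equivariant map $\phi\colon\mathfrak{m}^{\oplus 2}\to\mathfrak{m}$, $(a,b)\mapsto[a,y]+[x,b]$. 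Choose an $L$-invariant complement $N_\mathfrak{m}\subset\mathfrak{m}^{\oplus 2}$ to $T_p(Gp)$, so that $\phi|_{N_\mathfrak{m}}$ is an isomorphism, and take $S_V:=p+(\mathfrak{l}^{\oplus 2}\oplus N_\mathfrak{m}\oplus\C^{2n})$ as an $L$-invariant slice in the ambient space $V:=\sp_{2n}^{\oplus 2}\oplus\C^{2n}$. Luna then yields $X_n^{\wedge_{Gp}}\cong(G\times^L(S_V\cap X_n))^{\wedge_{G/L}}$.

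Next I observe that the defining map $\mu\colon V\to\g$, $(x,y,i)\mapsto[x,y]+i^2$, is the moment map for the natural $G$-invariant symplectic structure on $V$ (pair the two copies of $\g$ via the Killing form and use the standard symplectic form on $\C^{2n}$). Splitting the equation $\mu=0$ into its $\mathfrak{l}$- and $\mathfrak{m}$-components, the $\mathfrak{m}$-component has leading term $\phi|_{N_\mathfrak{m}}$ in the $N_\mathfrak{m}$-direction, an isomorphism; the formal implicit function theorem therefore uniquely determines the $N_\mathfrak{m}$-coordinate as a formal power series in the remaining coordinates $(u,v,w)\in\mathfrak{l}^{\oplus 2}\oplus\C^{2n}$. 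Substituting back, the residual $\mathfrak{l}$-equation has leading form
\[
\mu_L(u,v,w):=[u,v]+(w^2)_\mathfrak{l}=0,
\]
where $(\,\cdot\,)_\mathfrak{l}$ denotes the projection $\sp_{2n}\twoheadrightarrow\mathfrak{l}$; the higher-order corrections can be absorbed by a formal $L$-equivariant change of variables precisely because $\mu$ is a moment map (this is the affine algebraic analogue of the Marle--Guillemin--Sternberg normal form, with $\mu_L$ the genuine $L$-moment map on the symplectic slice $\mathfrak{l}^{\oplus 2}\oplus\C^{2n}$).

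Finally, the equation $\mu_L=0$ decouples along the $L$-isotypic decompositions $\mathfrak{l}=\bigoplus_{i=1}^k\gl_{n_i}\oplus\sp_{2n_0}$ and $\C^{2n}=\bigoplus_{i=1}^k(V_i^+\oplus V_i^-)\oplus V_0$: a direct expansion of $w^2\in S^2(\C^{2n})$ shows that $(w^2)_\mathfrak{l}$ contains no terms mixing distinct $L$-isotypic summands. Hence the $\gl_{n_i}$-component reads $[u^{(i)},v^{(i)}]+c\,a_ib_i=0$ for some nonzero constant $c$, which after an $L$-equivariant rescaling of $(a_i,b_i)$ is the defining equation of $M_{n_i}$; the $\sp_{2n_0}$-component reads $[u^{(0)},v^{(0)}]+w_0^2=0$, the defining equation of $X_{n_0}$; and the central coordinates of $(u,v)\in\mathfrak{l}^{\oplus 2}$ lie in $\mathfrak{z}(\mathfrak{l})^{\oplus 2}=\C^{2k}$ and are unconstrained (they commute with everything in $\mathfrak{l}$, so contribute nothing to $[u,v]$, while the central part of $(w^2)_\mathfrak{l}$ is already subsumed in the $\gl_{n_i}$-equations). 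Assembling gives $\mu_L^{-1}(0)\cong\C^{2k}\times\prod_iM_{n_i}\times X_{n_0}$ as $L$-schemes, yielding the lemma. The main obstacle is the implicit-function / normal-form step of the second paragraph: one must either invoke an algebraic analogue of Marle--Guillemin--Sternberg for zero fibres of moment maps, or verify directly via a formal change of variables that the higher-order corrections produced by the substitution do not alter the formal isomorphism type of the slice.
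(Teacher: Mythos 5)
Your proposal is correct and follows essentially the same route as the paper: both reduce the statement to the local normal form for the Hamiltonian $G$-action on $\g^{\oplus 2}\oplus\C^{2n}$ at $p$ and then take zero fibres of the intertwined moment maps, decoupling along the $L$-isotypic decomposition exactly as you describe. The ``main obstacle'' you flag --- the formal algebraic Marle--Guillemin--Sternberg normal form absorbing the higher-order corrections on the slice --- is precisely what the paper handles by citing the symplectic slice theorem of \cite{slice}, so no further verification is needed.
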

\begin{proof}
Note that the action of $G$ on $\g^{\oplus 2}\oplus \C^{2n}$ is Hamiltonian with moment
map $\mu: \g^{\oplus 2}\oplus \C^{2n}\rightarrow \g$ given by $\mu(x,y,i)=[x,y]+i^2$.
So we can apply the main result of \cite{slice} to this action and the point $p$.
Note that this result is stated in \cite{slice} for neighborhoods in the usual topology,
but it works for formal neighborhoods as well.
We get the isomorphism between the following two symplectic schemes with Hamiltonian
actions (that, in particular, intertwine the moment maps).
On the one side, we have $(\g^{\oplus 2}\oplus \C^{2n})^{\wedge_{Gp}}$.
On the other side, we have $\left(T^*(G\times^L \mathfrak{l})\times \C^{2n}\right)^{\wedge_{G/L}}$
with  natural symplectic form and moment map. The zero locus of the moment map in
$(\g^{\oplus 2}\oplus \C^{2n})^{\wedge_{Gp}}$ (as a scheme) is  $X_n^{\wedge_{Gp}}$.
The analogous locus in $\left(T^*(G\times^L \mathfrak{l})\times \C^{2n}\right)^{\wedge_{G/L}}$
is $\left(G\times^L (\C^{2k}\times\prod_{i=1}^k M_{n_i}\times X_{n_0})\right)^{\wedge_{G/L}}$.
This yields the required isomorphism.
\end{proof}

\begin{Rem}\label{Rem:non_normality}
Using Lemma \ref{Lem:local_structure} and the fact that $M_n$ is not irreducible, one sees that
$X_n$ is not normal.
\end{Rem}

\subsection{Dimension bound}
Here we are going to prove a technical lemma. Consider the map $\pi:X_n\rightarrow \g$ given by projection
to the first component. Let $\mathbb{O}\subset \g$ be an adjoint orbit.

\begin{Lem}\label{Lem:dim_preimage}
We have $\dim \pi^{-1}(\mathbb{O})<2n^2+3n$.
\end{Lem}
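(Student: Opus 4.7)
My plan is a two-step dimension count along the projection $\pi^{-1}(\mathbb{O}) \to \mathbb{O}$. For any fixed $x \in \mathbb{O}$, $G$-equivariance gives
$$\dim \pi^{-1}(\mathbb{O}) = \dim \mathbb{O} + \dim \pi^{-1}(x).$$
To bound $\dim \pi^{-1}(x) = \dim\{(y,i) : [x,y] = -i^2\}$, I would further project $(y,i) \mapsto i$: the image is $I_x := \{i\in \C^{2n} : i^2 \in \operatorname{im}(\operatorname{ad} x)\}$, and each nonempty fiber is a torsor for $\g^x = \ker(\operatorname{ad} x)$. Using the invariant form on $\sp_{2n}$ to identify $\operatorname{im}(\operatorname{ad} x) = (\g^x)^\perp$, and recalling $\dim\mathbb{O} = \dim\g - \dim\g^x$, this gives
$$\dim \pi^{-1}(\mathbb{O}) = \dim \g + \dim I_x = 2n^2 + n + \dim I_x,$$
so the lemma reduces to the strict bound $\dim I_x < 2n$.

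To prove that bound I would translate the defining condition of $I_x$ into explicit quadratic equations on $\C^{2n}$. Under the identification $\sp_{2n} \cong S^2\C^{2n}$ from the introduction, the trace pairing turns $\langle z, i^2\rangle = 0$ into $\omega(zi, i) = 0$. Thus $I_x$ is the common zero locus of the quadratic forms $Q_z(i) := \omega(zi, i)$ for $z \in \g^x$. The essential observation is that $z \mapsto Q_z$ is an injection from $\sp_{2n}$ into the space of quadratic forms on $\C^{2n}$: indeed, $\omega(z\cdot,\cdot)$ is already symmetric for $z \in \sp_{2n}$, and its vanishing as a bilinear form forces $z = 0$ by nondegeneracy of $\omega$.

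It now suffices to exhibit one nonzero element of $\g^x$: for then $Q_z \not\equiv 0$ cuts $I_x$ down to a proper subvariety of $\C^{2n}$, so $\dim I_x \leqslant 2n - 1$. For $x \ne 0$, take $z = x \in \g^x \setminus \{0\}$. For $x = 0$, the condition $i^2 \in (\g^x)^\perp$ becomes $i^2 = 0$ in $\sp_{2n}$; since $i^2$ acts on $\C^{2n}$ as (a nonzero scalar times) $v \mapsto \omega(v, i) i$, this forces $i = 0$, hence $\dim I_0 = 0$. The only real friction I expect is bookkeeping the identifications $\sp_{2n} \cong S^2\C^{2n}$ and checking that the invariant pairing used to define $(\g^x)^\perp$ matches the quadratic-form description of $i^2$; once that dictionary is fixed, the dimension count is immediate.
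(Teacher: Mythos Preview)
Your argument is correct and follows the same fibering strategy as the paper: pass to a fixed $x\in\mathbb{O}$, then project $(y,i)$ onto the $i$-coordinate. The paper projects instead to $i^2$, landing in $Y_{n,x}:=\mathbb{O}_{min}\cap[\g,x]$; since $i\mapsto i^2$ is finite this is an immaterial difference, and both reductions come down to showing that the image is a proper subvariety. The endgames diverge slightly. You show $\dim I_x<2n$ by exhibiting one nonzero quadratic $Q_z$ with $z\in\g^x$, using that $z\mapsto Q_z$ embeds $\sp_{2n}$ into the quadratic forms on $\C^{2n}$. The paper shows the equivalent statement $\mathbb{O}_{min}\not\subset[\g,x]$ by noting that $[\g,x]$ is a proper subspace while $\mathbb{O}_{min}$, being $G$-stable in the simple module $\g$, must span all of $\g$. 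Your route is more explicit and stays inside $\C^{2n}$; the paper's is representation-theoretic and transfers verbatim to $M_n$ (Remark~\ref{Rem:dim_preimage}) or indeed to any simple $\g$. Incidentally, your case split at $x=0$ is unnecessary: $\g^x\neq 0$ always (take $z=x$ if $x\neq 0$, and $\g^0=\g$ otherwise), so your general argument already covers it.
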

\begin{proof}
Fix $x\in \mathbb{O}$. We need to show that $X_{n,x}:=\{(y,i)| [x,y]=i^2\}$ has dimension
less then $\dim \mathfrak{z}_{\g}(x)+2n$. Also consider the varieties $$\underline{X}_{n,x}:=\{(y,z)| [x,y]=z, z\in \mathbb{O}_{min}\}, Y_{n,x}:=\mathbb{O}_{min}\cap [\g,x].$$
We have the natural maps $\rho_1:X_{n,x}\mapsto \underline{X}_{n,x}, (y,i)\mapsto (y,i^2)$
and $\rho_2:\underline{X}_{n,x}\rightarrow Y_{n,x}, (y,z)\mapsto z$. We note that $\rho_1$
is finite, while $\rho_2$ is an affine bundle with fiber of dimension $\dim \mathfrak{z}_{\g}(x)$.
So we reduce to proving that $\dim Y_{n,x}<2n=\dim \mathbb{O}_{min}$, equivalently, that
$Y_{n,x}\neq \mathbb{O}_{min}$, equivalently, $\mathbb{O}_{min}\not\subset [\g,x]$.
Note that $\mathbb{O}_{min}$ is $G$-stable. So if $\mathbb{O}_{min}\subset [\g,x]$,
then $[\g,x]$ contains a nonzero $G$-stable subspace. Since $\g$ is a simple Lie algebra,
this is impossible. This contradiction finishes the proof.
\end{proof}

\begin{Rem}\label{Rem:dim_preimage}
Note that a direct analog of this lemma holds for $M_n$: if $\pi$ denotes the projection
$(x,y,i,j)\mapsto x$, then $\dim \pi^{-1}(\mathbb{O})<n^2+2n-2$. The proof essentially repeats
that of Lemma \ref{Lem:dim_preimage}.
\end{Rem}

\subsection{Proof of Theorem \ref{Thm:X_n_properties}}
The proof requires two technical statements. Consider the regular semisimple locus $\g^{reg}\subset \g$
and set $X_n^{reg}:=\pi^{-1}(\g^{reg})$. This is an open subscheme in $X_n$.

\begin{enumerate}
\item We will show that $X_n^{reg}$ is dense in $X_n$.
\item We will show that $X_n^{reg}$ is irreducible.
\end{enumerate}

These two statements are proved in the lemmas below. After that we will easily
finish the proof of Theorem \ref{Thm:X_n_properties}.


\begin{Lem}\label{Lem:density}
$X_n^{reg}$ is dense in $X_n$.
\end{Lem}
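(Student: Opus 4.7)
The plan is to show that $\pi^{-1}(\g\setminus\g^{reg})=X_n\setminus X_n^{reg}$ has dimension strictly less than $2n^2+3n$. Since $X_n$ is cut out by $\dim\sp_{2n}=2n^2+n$ equations (the components of $[x,y]+i^2=0$) inside an ambient space of dimension $4n^2+4n$, every irreducible component of $X_n$ has dimension at least $2n^2+3n$. No component can then be contained in a closed subscheme of strictly smaller dimension, so the dimension bound immediately forces every component to meet the open subscheme $X_n^{reg}$, which is what density asserts.

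Stratify $\g\setminus\g^{reg}$ into the finitely many decomposition classes $S$ it contains. Each $S$ is labelled by (the conjugacy class of) a Levi $\mathfrak{l}\cong\sp_{2n_0}\oplus\bigoplus_{j=1}^{k}\gl_{n_j}$ (with $n_0+\sum n_j=n$) together with a non-regular nilpotent $L$-orbit in $[\mathfrak{l},\mathfrak{l}]$; here ``non-regular'' is exactly what makes a generic point $x=x_s+x_n$ of $S$ fail to be $\g$-regular. For such $x$ we have $\dim S=\dim G-\dim\mathfrak{z}_\g(x)+\dim\mathfrak{z}(\mathfrak{l})$, and by the argument of Lemma~\ref{Lem:dim_preimage} the fibre satisfies $\dim\pi^{-1}(x)=\dim\mathfrak{z}_\g(x)+\dim\{i\in\C^{2n}:i^2\in[x,\g]\}$. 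The required bound $\dim\pi^{-1}(S)<\dim G+2n=2n^2+3n$ is therefore equivalent to $\dim\{i:i^2\in[x,\g]\}<2n-\dim\mathfrak{z}(\mathfrak{l})$.

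To verify this inequality, decompose $\C^{2n}$ into $\mathfrak{z}(\mathfrak{l})$-weight spaces as $\C^{2n_0}\oplus\bigoplus_j(\C^{n_j}\oplus(\C^{n_j})^*)$ and write $i=i_0+\sum_j(u_j+v_j)$. A direct computation in $S^2(\C^{2n})=\sp_{2n}$ shows that the $\mathfrak{l}$-component of $i^2$ equals $i_0^2+2\sum_j u_j\cdot v_j$, and since $[x,\g]=\mathfrak{l}^\perp\oplus[x_n,\mathfrak{l}]$ decomposes block by block, the condition $i^2\in[x,\g]$ factors as $i_0^2\in[x_n^0,\sp_{2n_0}]$ together with $u_j\cdot v_j\in[x_n^j,\gl_{n_j}]$ for each $j$ (writing $x_n=(x_n^0,x_n^1,\ldots,x_n^k)$). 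Hence $\{i:i^2\in[x,\g]\}$ is a product of the corresponding loci in $\C^{2n_0}$ and in each $\C^{n_j}\oplus(\C^{n_j})^*$, whose dimensions are bounded by the argument of Lemma~\ref{Lem:dim_preimage} applied inside the simple algebras $\sp_{2n_0}$ and $\sl_{n_j}$ whenever the relevant component of $x_n$ is nonzero, and by a direct description of $\{(u_j,v_j):u_j\cdot v_j=0\}$ otherwise. A short case analysis, depending on which of $x_n^0,x_n^j$ vanish and using $n_0+\sum n_j=n$ together with the hypothesis that $x_n$ is not $\mathfrak{l}$-regular, assembles these per-factor bounds into the required strict inequality, with equality ruled out precisely because we are outside $\g^{reg}$. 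The main obstacle is this case analysis; everything else is dimension bookkeeping.
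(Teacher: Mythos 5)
Your overall strategy --- bound $\dim\pi^{-1}(\g\setminus\g^{reg})$ strictly below $2n^2+3n$ by stratifying into decomposition classes and estimating fibres, then invoke the complete-intersection lower bound on components --- is a legitimate alternative to the paper's argument, which instead works near a closed orbit of a putative bad component via the local normal form of Lemma \ref{Lem:local_structure} and reduces the estimate to the nilpotent loci of the factors $M_{n_i}$ and $X_{n_0}$. However, as written your proof has a genuine gap at its very first step: in this paper $\g^{reg}$ denotes the regular \emph{semisimple} locus, so $\g\setminus\g^{reg}$ contains, besides the classes you list, all decomposition classes with $\mathfrak{l}\neq\h$ whose nilpotent part $x_n$ \emph{is} regular in $[\mathfrak{l},\mathfrak{l}]$, i.e.\ the regular non-semisimple elements. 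Your stratification omits these, and your argument does not extend to them as stated, because you explicitly invoke the non-regularity of $x_n$ to obtain strictness (``equality ruled out precisely because we are outside $\g^{reg}$'' conflates ``not regular'' with ``not regular semisimple''). What your argument would establish is density of $\pi^{-1}$ of the larger regular locus, which neither implies the lemma nor suffices for the subsequent proof of Lemma \ref{Lem:irreducibility}, where the identification $\g^{reg}=G\times^{N_G(T)}\h^{reg}$ requires regular semisimple elements.

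Second, the decisive quantitative step --- the inequality $\dim\{i:\ i^2\in[x,\g]\}<2n-\dim\mathfrak{z}(\mathfrak{l})$ --- is deferred to an unexecuted ``short case analysis'', and that is where essentially all the content of the lemma lives. The per-factor bounds are not direct quotations of Lemma \ref{Lem:dim_preimage}: for a $\gl_{n_j}$-factor the relevant locus is $\{(u_j,v_j):\ u_jv_j\in[x_n^j,\gl_{n_j}]\}$, and one must account for the one-dimensional fibres of $(u,v)\mapsto uv$ over the rank-one locus, for the trace direction of $\gl_{n_j}$, and for the degenerate case $x_n^j=0$, where $\{u_jv_j=0\}$ has dimension $n_j$ and behaves differently from the estimate coming from $\mathbb{O}_{min}\not\subset[x,\g]$; similarly, for the $\sp_{2n_0}$-factor with $x_n^0=0$ the locus is just $\{i_0=0\}$. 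Spot checks suggest the numbers do assemble correctly (e.g.\ for semisimple non-regular $x$ one gets $n-n_0<2n-k$ unless $\mathfrak{l}=\h$), so I believe the route can be completed, but you must actually carry out the case analysis and, crucially, add the missing regular-non-semisimple strata before this counts as a proof.
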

\begin{proof}
The subscheme $X_n$ is defined by $\dim \g$ equations in a vector space of dimension
$2\dim\g+2n$. So the dimension of every irreducible component of $X_n$ is at least
$\dim\g+2n$. For the sake of contradiction, let $Z$ be a component that does not intersect $X_n^{reg}$. Let $p:=(x,y,0)$
be a point in a closed orbit in $Z$ that maps to a Zariski generic point in
the image of $Z$ in $\g\quo G$ (via $Z\xrightarrow{\pi}\g\rightarrow \g\quo G$).
Let $L$ be the centralizer of $x$ and $y$. Recall, Lemma
\ref{Lem:local_structure} that  locally near $Gp$ the scheme $X_n$ looks like
$\left(G\times^L (\C^{2k}\times\prod_{i=1}^k M_{n_i}\times X_{n_0})\right)^{\wedge_{G/L}}$.
The stabilizer of every closed orbit in $Z$ contains a conjugate of $L$ by the construction
of the latter.
It follows that the image of $\left(G\times^L (\C^{2k}\times\prod_{i=1}^k M_{n_i}\times X_{n_0})\right)^{\wedge_{G/L}}\cap Z$ under taking the categorical quotient
lies in $(\C^{2k})^{\wedge_0}$. So the intersection of $Z$ with
$(\prod_{i=1}^k M_{n_i}\times X_{n_0})^{\wedge_0}$ lies in the nilpotent locus.
Using Lemma \ref{Lem:dim_preimage} (for the $X_{n_0}$-factor) and
Remark \ref{Rem:dim_preimage} (for the $M_{n_i}$-factor) we conclude
that the dimension of the intersection does not exceed $\sum_{i=1}^k (n_i^2+2n_i-2-1)+
2n_0^2+3n_0-1$. Therefore the dimension of $Z$ does not exceed
$$\dim G/L+2k+\sum_{i=1}^k (n_i^2+2n_i-2-1)+2n_0^2+3n_0-1=\dim \g+2n-1.$$
This contradicts the observation that $\dim Z\geqslant \dim \g+2n$.
\end{proof}

\begin{Lem}\label{Lem:irreducibility}
The scheme $X_n^{reg}$ is irreducible.
\end{Lem}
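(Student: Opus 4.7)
The plan is to reduce the problem to a single Cartan subalgebra. Fix a Cartan $\h\subset\g$ and set $\tilde X_n:=\pi^{-1}(\h^{reg})$. Since every regular semisimple element of $\g$ is $G$-conjugate into $\h^{reg}$, we have $X_n^{reg}=G\cdot\tilde X_n$. It therefore suffices to show that the Weyl group $W=N_G(\h)/H$ acts transitively on the (necessarily finite) set of irreducible components of $\tilde X_n$: once this holds, then for any single component $Y\subset\tilde X_n$ one has $X_n^{reg}=G\cdot Y$ (lifting $W$ through $N_G(\h)\subset G$), and this is the image of the irreducible variety $G\times Y$ under the action map, hence irreducible.

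The next step is an explicit description of $\tilde X_n$. For $x\in\h^{reg}$ the operator $\operatorname{ad}(x)$ has kernel $\h$ and image $\h^\perp$, the direct sum of root spaces, so $[x,y]=-i^2$ is solvable if and only if the $\h$-component of $i^2$ vanishes, and then $y$ is uniquely determined modulo $\h$. I would fix a symplectic basis $e_1,\ldots,e_n,f_1,\ldots,f_n$ of $\C^{2n}$ adapted to $\h$, for which $\h$ is spanned by the products $e_k\cdot f_k\in S^2(\C^{2n})=\sp_{2n}$. Writing $i=\sum_{k=1}^n(u_k e_k+v_k f_k)$, a short calculation in the symmetric algebra shows that the $\h$-component of $i^2$ equals $2\sum_k u_k v_k\,(e_k\cdot f_k)$. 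Hence the locus $Z:=\{i\in\C^{2n}\mid i^2\in\h^\perp\}$ is cut out by the reducible system $u_k v_k=0$ for $k=1,\ldots,n$, and decomposes into $2^n$ coordinate $n$-planes $Z_S$ indexed by subsets $S\subset\{1,\ldots,n\}$, where $Z_S=\{u_k=0\ \text{for}\ k\in S,\ v_k=0\ \text{for}\ k\notin S\}$. The forgetful map $\tilde X_n\to\h^{reg}\times Z$ is a trivial $\h$-bundle, split by inverting $\operatorname{ad}(x)$ on $\h^\perp$, so $\tilde X_n\cong\h^{reg}\times Z\times\h$ has exactly $2^n$ irreducible components.

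For the final step I would exploit the hyperoctahedral structure $W=(\mathbb{Z}/2)^n\rtimes S_n$ of the Weyl group of $\sp_{2n}$. The subgroup $(\mathbb{Z}/2)^n$ has representatives in $G$ acting on the symplectic basis by $e_k\mapsto f_k,\ f_k\mapsto -e_k$ on any prescribed set of indices (and trivially on the remaining pairs); this interchanges $u_k$ with $\pm v_k$ and therefore sends $Z_S$ to $Z_{S\,\triangle\, T}$, where $T$ is the set of flipped indices. In particular $(\mathbb{Z}/2)^n$, and a fortiori $W$, acts transitively on the $2^n$ components of $Z$ and hence of $\tilde X_n$. The main conceptual obstacle, as I see it, is exactly this initial reducibility of $\tilde X_n$: the fiber over a fixed regular semisimple $x$ already breaks into $2^n$ pieces, one for each way of eliminating either $u_k$ or $v_k$ in every symplectic pair, which at first looks fatal. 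The resolution is that the hyperoctahedral Weyl group of $\sp_{2n}$ is precisely rich enough to fuse these $2^n$ components into a single $G$-orbit, forcing $X_n^{reg}$ to be irreducible.
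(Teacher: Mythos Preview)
Your proposal is correct and follows essentially the same route as the paper: reduce to $\pi^{-1}(\h^{reg})$, use the forgetful affine (indeed trivial $\h$-) bundle to the locus $\{i\in\C^{2n}\mid i^2\in\h^\perp\}$, identify its $2^n$ coordinate-plane components via Darboux coordinates, and conclude by observing that $(\mathbb{Z}/2)^n\subset W$ permutes them transitively. The only cosmetic difference is that the paper phrases the first reduction via the associated bundle $X_n^{reg}=G\times^{N_G(T)}X_n^0$ rather than as an image $G\cdot Y$, but the content is identical.
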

\begin{proof}
Recall that $\g^{reg}=G\times^{N_G(T)}\mathfrak{h}^{reg}$. Set $X_n^0=\{(x,y,i)|
x\in \mathfrak{t}^{reg}, [x,y]=i^2\}$ so that $X_n^{reg}=G\times^{N_G(T)}X_n^0$.
We need to show that the Weyl group $W=N_G(T)/T$ acts transitively on the irreducible
components of $X_n^0$. Set $Y_n=\{i\in \C^{2n}| i^2\in \mathfrak{h}^\perp\}$.
We have a forgetful map $X_n^0\rightarrow Y_n\times \mathfrak{h}^{reg}$
forgetting $y$. This map is an affine bundle. So we need to show that $W$
transitively acts on the set of irreducible components of $Y_n$.
Let $p_1,\ldots,p_n,q_1,\ldots, q_n$ be Darboux coordinates on $\C^{2n}$.
Then $Y_n$ is given by
$$\{(p_1,\ldots,p_n,q_1,\ldots,q_n)| p_iq_i=0, \forall i=1,\ldots,n\}.$$
So there are $2^n$ irreducible components of $Y_n$: we need to choose if $p_i=0$
or $q_i=0$ for all $i=1,\ldots,n$. It is clear that $W$ permutes these components
transitively -- in fact, $\{\pm 1\}^n\subset W$ acts simply transitively on them.
\end{proof}

\begin{proof}[Proof of Theorem \ref{Thm:X_n_properties}]
Lemmas \ref{Lem:density} and \ref{Lem:irreducibility} imply that $X_n$ is irreducible.
Note that $\mu$ is a submersion at points with free $G$-orbit. There is a point in $X_n$
with free orbit: for example we can take $(x,0,i)$, where $x\in \mathfrak{h}^{reg}$
and $i$ is given by (in the notation of the proof of Lemma \ref{Lem:irreducibility})
by $p_1=\ldots=p_n=1,q_1=\ldots=q_n=0$. It follows that $\dim X_n=\dim \g+2n$
and that $X_n$ is generically reduced. Since $X_n$ is a complete intersection,
we see that $X_n$ is reduced.
\end{proof}

\section{Application to commuting scheme}
The goal of this section is to prove Theorem \ref{Thm:X_n_quotient}. Namely, we have inclusions
$\h^{\oplus 2}\hookrightarrow C_2(\g)\hookrightarrow X_n$ that give rise to
morphisms of categorical quotients
\begin{equation}\label{eq:quotient_morphisms}
\h^{\oplus 2}/W\rightarrow C_2(\g)\quo G\rightarrow X_n\quo G.
\end{equation}

\begin{proof}[Proof of Theorem \ref{Thm:X_n_quotient}]
We need to prove that the morphisms in (\ref{eq:quotient_morphisms}) are isomorphisms.
We note that the second morphism is a closed embedding. Also thanks to Corollary
\ref{Cor:closed_orbits} it is bijective. Thanks to Theorem \ref{Thm:X_n_properties},
$X_n$ is reduced, hence so is $X_n\quo G$. It follows that the second morphism in
(\ref{eq:quotient_morphisms}) is an isomorphism, and, in particular,
$C_2(\g)\quo G$ is reduced.

The first morphism in (\ref{eq:quotient_morphisms}) is bijective. It remains to show that
it is a full embedding, equivalently, that the pullback homomorphism
$\C[C_2(\g)]^G\rightarrow \C[\h^{\oplus 2}]^W$ is an isomorphism. Note that
both algebras are Poisson. For the source this holds because $\C[C_2(\g)]^G$
is obtained from $\C[\g^{\oplus 2}]$ by Hamiltonian reduction. The homomorphism
$\C[C_2(\g)]^G\rightarrow \C[\h^{\oplus 2}]^W$ intertwines the Poisson brackets.
To see this note that this homomorphism becomes a Poisson isomorphism after tensoring with
$\C[\g^{reg}]^G\cong \C[\h^{reg}]^W$ (in the first coordinate) and $\C[\h^{\oplus 2}]^W\hookrightarrow
\C[\h^{reg}]^W\otimes_{\C[\h]^W}\C[\h^{\oplus 2}]^W$. By results of \cite{Wallach},
the Poisson algebra $\C[\h^{\oplus 2}]^W$ is generated by the two subalgebras $\C[\h]^W$
(in the first and the second coordinates). To finish the proof it remains to notice that
the homomorphism $\C[C_2(\g)]^G\rightarrow \C[\h^{\oplus 2}]^W$ restricts to
$\C[\g]^G\xrightarrow{\sim} \C[\h]^W$ (for both the first and the second copy).
\end{proof}

\end{document}